\documentclass[a4paper,12pt]{article}
\usepackage{amsmath,amsthm,amssymb,amsfonts}
\usepackage{bbm,bm}
\usepackage{latexsym}
\usepackage{mathrsfs}
\usepackage{threeparttable}
\usepackage{tabularx}
\usepackage{booktabs}
\usepackage{graphicx,subfigure}
\usepackage{color}
\usepackage{indentfirst}
\usepackage{geometry}\geometry{left=2.5cm,right=2.5cm,top=2.5cm,bottom=2.5cm}
\usepackage{caption}\captionsetup{labelsep=period}
\usepackage{lineno}
\usepackage{abstract}
\usepackage{enumerate,mdwlist}
\usepackage[numbers,sort&compress]{natbib}
\usepackage[colorlinks,linkcolor=blue,citecolor=red]{hyperref}
\usepackage{epstopdf}

\setlength{\baselineskip}{20pt}

\def \[{\begin{equation}}
\def \]{\end{equation}}

\newtheorem{thm}{Theorem}[section]

\newtheorem{lem}[thm]{Lemma}
\newtheorem{cor}[thm]{Corollary}

\newtheorem{conj}[thm]{Conjecture}

\begin{document}

\setlength{\baselineskip}{20pt}
\begin{center}{\Large \bf Cubic vertices of minimal bicritical graphs}\footnote{This work
is supported by NSFC\,(Grant No. 12271229).}

\vspace{4mm}

{Jing Guo, Hailun Wu, Heping Zhang \footnote{The corresponding authors.
E-mail address: guoj20@lzu.edu.cn (J. Guo), wuhl18@lzu.edu.cn (H. Wu), zhanghp@lzu.edu.cn (H. Zhang).}
\renewcommand\thefootnote{}\footnote{}}

\vspace{2mm}

\footnotesize{ School of Mathematics and Statistics, Lanzhou University, Lanzhou, Gansu 730000, P. R. China}

\end{center}
\noindent {\bf Abstract}:
A graph $G$ with four or more vertices is called bicritical if the removal of
any pair of distinct vertices of $G$ results in a graph with a perfect matching.
A bicritical graph is minimal if
the deletion of each edge results in a non-bicritical graph.
Recently, Y. Zhang et al. and F. Lin et al. respectively showed that bicritical graphs
without removable edges and minimal bricks have at least four cubic vertices.
In this note, we show that minimal bicritical graphs also have  at least four cubic vertices,
so confirming O. Favaron and M. Shi's conjecture
in the case of $k=2$ on minimal $k$-factor-critical graphs.

\vspace{2mm} \noindent{\bf Keywords}: Minimal bicritical graph; Perfect matching;
Cubic degree; Cubic vertex
\vspace{2mm}

\noindent{AMS subject classification:} 05C70,\ 05C07

 {\setcounter{section}{0}
\section{Introduction}\setcounter{equation}{0}

Graphs considered in this paper are finite and simple.
Let $G$ be a graph with vertex set $V(G)$ and edge set $E(G)$.
The {\em order} of graph $G$ is the cardinality of $V(G)$.
For a vertex $x$ in $G$, the degree of $x$, denoted by $d_{G}(x)$, is the number of edges  incident with $x$.
If $d_{G}(x)=3$, then $x$ is called a {\em cubic vertex}.
A {\em perfect matching} $M$ of $G$ is a set of  edges such that each vertex is  incident with exactly one edge
 of $M$. A connected graph $G$ with at least two vertices is {\em matching covered}
if each edge lies in a perfect matching of $G$.
An edge $e$ of a matching covered graph $G$ is {\em removable} if $G-e$ is also matching covered.
A graph $G$ with four or more vertices is called {\em bicritical} \cite{LL}
if the removal of any pair of distinct vertices of $G$ results in a graph with a perfect matching.
We can see that each bicritical graph is matching covered.

A bicritical graph is {\em minimal} if for every edge,
the deletion of it results in a graph that is not bicritical.
For convenience, we call an edge $e$ of a bicritical graph $G$ is {\em deletable}
if $G-e$ is also bicritical. So a minimal bicritical graph has no deletable edge.
Obviously,  a bicritical graph without removable edges is minimal bicritical.
L. Lov\'{a}sz and M. D. Plummer \cite{LP75} proved that
every minimal bicritical graph does not contain $K_{3,3}$ as a subgraph and
contains no wheel as a subgraph if it is not a wheel itself.
For details concerning minimal bicritical graphs,
the reader may refer to \cite{LP} by L. Lov\'{a}sz and M. D. Plummer.

A 3-connected and  bicritical graph is called a {\em brick},
which plays a key role in matching theory of graphs.
J. Edmonds et al. \cite{ELW} and L. Lov\'{a}sz \cite{LO} proposed and developed the
``tight cut decomposition" of matching covered graphs into list of bricks and braces
in an essentially unique manner.
A brick $G$ is {\em minimal} if $G-e$ is not a brick for any $e\in E(G)$.
M. H. de Carvalho et al. \cite{CLM} proved that every minimal brick contains a cubic vertex.
This was extended by S. Norine and R. Thomas \cite{NT} as follows.

\begin{thm}[\cite{NT}]\label{NT}
Every minimal brick has at least three cubic vertices.
\end{thm}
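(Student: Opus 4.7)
The plan is to argue by contradiction, strengthening the argument of de Carvalho, Lucchesi, and Murty, who proved the weaker statement that every minimal brick has at least one cubic vertex. Let $G$ be a minimal brick; suppose for contradiction that $G$ has at most two cubic vertices, so that all but at most two vertices have degree $\geq 4$. The goal is then to exhibit an edge $e \in E(G)$ for which $G-e$ remains a brick, contradicting the minimality of $G$.

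First, I would set up a classification of edges via certificates of failure. For each edge $e$ of $G$, since $G-e$ is not a brick, either (i) $G-e$ is not $3$-connected, yielding a $2$-vertex cut of $G-e$, or (ii) $G-e$ is not bicritical, yielding a pair $\{x,y\}$ such that $G-e-x-y$ has no perfect matching. Next, using Lov\'asz's ear decomposition of bricks together with the at-most-two-cubic-vertex assumption, I would choose an extremal edge $e=uv$ whose endpoints are both non-cubic (which is forced whenever $|V(G)|$ is large enough, since cubic vertices are scarce) and whose certificate is minimum in an appropriate sense. Finally, I would analyze this certificate in detail and show, by uncrossing it against certificates of incident edges, that one can actually recover a valid perfect matching or a valid $3$-connectivity witness in $G-e$, producing the desired contradiction.

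The principal obstacle will be the uncrossing step: when certificates for different edges cross --- two $2$-vertex cuts that interleave, or two barriers sharing vertices in an awkward configuration --- the standard uncrossing techniques for tight cuts must be adapted to the brick setting and combined with the scarcity of cubic vertices in a delicate way. This is precisely where the jump from one to three cubic vertices lives; it is not merely a matter of iterating the CLM argument, because each iteration may ``use up'' structural slack that is not regenerated. As a final piece, small bricks for which the extremal/uncrossing framework degenerates --- notably $K_4$, the triangular prism $\overline{C_6}$, and the Petersen graph --- would have to be verified directly, since they account for the boundary cases in which the general asymptotic argument breaks down.
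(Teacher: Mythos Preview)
The paper does not contain a proof of this statement: Theorem~\ref{NT} is quoted from Norine and Thomas~\cite{NT} as background and is immediately superseded by the stronger Theorem~\ref{LZL} of Lin, Zhang, and Lu, also quoted without proof. The paper's own contribution (Theorem~\ref{main}) concerns minimal \emph{bicritical} graphs, not minimal bricks, and uses Theorem~\ref{LZL} as a black box. So there is no ``paper's own proof'' to compare your proposal against.

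As for your proposal on its own merits, it is not a proof but an outline with the hard steps left as placeholders. Phrases such as ``choose an extremal edge \dots\ whose certificate is minimum in an appropriate sense'' and ``analyze this certificate in detail and show, by uncrossing it against certificates of incident edges, that one can actually recover a valid perfect matching or a valid $3$-connectivity witness'' are precisely the content of the Norine--Thomas paper; stating that you would do them is not doing them. In particular, the uncrossing of barriers in a brick is delicate because bricks have no nontrivial tight cuts and no nontrivial barriers, so the ``certificates'' you obtain after deleting an edge are of a restricted form, and exploiting that form is the substance of the argument. Your sketch never identifies what that form is, never specifies the extremal quantity being minimized, and never explains why scarcity of cubic vertices forces the uncrossing to succeed. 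Also, the closing remark that $K_4$, $\overline{C_6}$, and the Petersen graph ``account for the boundary cases'' is unmotivated: all three are cubic, hence trivially have at least three cubic vertices, so they are not boundary cases for this statement at all. If you want to prove Theorem~\ref{NT}, you should consult~\cite{NT} directly; the present paper will not help, since it treats the result as an input rather than an output.
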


Further, F. Lin et al. \cite{LZL} showed the existence of four such cubic vertices.

\begin{thm}[\cite{LZL}]\label{LZL}
Every minimal brick has at least four cubic vertices.
\end{thm}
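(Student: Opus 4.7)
The plan is to argue by contradiction, using Theorem~\ref{NT} as a base. Suppose $G$ is a minimal brick with fewer than four cubic vertices; then by Theorem~\ref{NT}, $G$ has exactly three cubic vertices $x_1,x_2,x_3$, while every remaining vertex has degree at least $4$. The goal is to derive a contradiction from this exact count by combining bicriticality with the minimality of $G$.

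The first step is to unpack the local matching structure forced at each cubic vertex. If $x$ is cubic with $N(x)=\{a,b,c\}$, then a perfect matching of $G-\{a,b\}$ must cover $x$ by the single remaining edge $xc$; doing this for all three pairs in $\{a,b,c\}$ yields three perfect matchings whose pairwise symmetric differences are $M$-alternating cycles through $x$, constraining how $a,b,c$ are joined to the rest of $G$. The second step is to exploit minimality: for every edge $e$ at a cubic vertex $x_i$, the graph $G-e$ fails to be a brick, so $G-e$ either is not $3$-connected (giving a $2$-cut positioned relative to $x_i$) or is not bicritical (giving a pair of vertices whose removal destroys every perfect matching). Together these two types of constraints let me catalogue, up to symmetry, the admissible local configurations around each $x_i$.

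The third step is a case analysis on how $x_1,x_2,x_3$ sit relative to one another, split according to whether any two are adjacent and whether any two share a common neighbor. In cases where two cubic vertices interact, I would try to exhibit a concrete deletable edge at one of them, contradicting minimality; the forced matchings from Step~1 give a small list of candidate edges to test, and the constraint that $G$ contain no $K_{3,3}$-subdivision rules out degenerate arrangements. In the remaining case, where the three cubic vertices are pairwise non-adjacent with pairwise disjoint neighborhoods, the plan is an inductive reduction: retract a carefully chosen edge at $x_1$ (identifying $x_1$ with one of its neighbors and removing resulting parallel edges, or performing the standard ``bicontraction'' at $x_1$) to produce a smaller matching covered graph $G'$, then argue via the tight-cut decomposition that some component brick $G''$ of $G'$ is again a minimal brick with at most three cubic vertices, so that the inductive hypothesis on a smaller minimal counterexample yields the contradiction.

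The step I expect to be the main obstacle is verifying that the reduction in the non-interacting case indeed produces a brick $G''$ whose number of cubic vertices is at most three. A bicontraction at $x_1$ can both create new cubic vertices (from degree-$4$ neighbors of $x_1$) and destroy the $3$-connectivity or bicriticality used to stay within the class of bricks; controlling both effects simultaneously, and tracking how cubic vertices are distributed among the bricks of the tight-cut decomposition, appears to be the delicate technical core of the argument. Overcoming it will likely require choosing the edge to retract using the extra room given by the assumption of exactly three (rather than at least three) cubic vertices, and strengthening the induction hypothesis to track both the cubic-vertex count and the number of bricks produced by the decomposition.
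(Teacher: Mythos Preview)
The paper does not contain a proof of this statement. Theorem~\ref{LZL} is quoted from \cite{LZL} (Lin, Zhang and Lu) as an external result and is then used as a black box in the proof of the paper's main theorem (Theorem~\ref{main}); there is no ``paper's own proof'' for you to be compared against.

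As for the proposal on its own terms: what you have written is a plan, not a proof, and you explicitly leave the decisive step unresolved. Two of the ingredients are also unsupported. First, you invoke ``the constraint that $G$ contain no $K_{3,3}$-subdivision,'' but no such fact is available for minimal bricks; the Lov\'asz--Plummer result mentioned in this paper is that a minimal \emph{bicritical} graph contains no $K_{3,3}$ as a \emph{subgraph}, and a minimal brick need not be minimal bicritical (deleting an edge may destroy $3$-connectivity while preserving bicriticality). Second, your inductive reduction via bicontraction at a cubic vertex is precisely where the real work lies: bicontraction can both create new cubic vertices (any degree-$4$ vertex adjacent to the merged pair drops to degree $3$) and destroy brickness, and you give no mechanism for choosing the contraction so that the resulting brick in the tight-cut decomposition is again minimal with at most three cubic vertices. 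Without that, the case analysis in Steps~1--3 has nothing to feed into. The actual argument in \cite{LZL} requires a detailed analysis of brick reductions that your outline does not approximate.
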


Generally, a graph $G$ of order $n$ is said to be {\em $k$-factor-critical} \cite{F, Y}
for integers $1\leq k < n$, if the removal of any $k$ vertices results in a graph
with a perfect matching. 1- and 2-factor-critical graphs are
the well-known factor-critical and bicritical graphs, respectively.
For more about the $k$-factor-critical graphs, the reader may refer to a monograph \cite{YL}.
In particular, a graph $G$ is called {\em minimal $k$-factor-critical}
if $G$ is $k$-factor-critical but $G-e$ is not $k$-factor-critical for any $e\in E(G)$.
Since a $k$-factor-critical graph is $k$-connected and $(k+1)$-edge-connected
and thus has minimum degree at least $k+1$, O. Favaron and M. Shi \cite{FS} proposed
the following conjecture and confirmed it for $k=1, n-6, n-4$ and $n-2$.

\begin{conj}[\cite{FS}]\label{conj1}
Let $G$ be a minimal $k$-factor-critical graph of order $n$ with $1\leq k<n$.
Then $G$ has minimum degree $k+1$.
\end{conj}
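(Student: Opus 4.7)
The bound $\delta(G)\geq k+1$ holds automatically for any $k$-factor-critical graph (since such a graph is $(k+1)$-edge-connected), so it suffices to exhibit a vertex of degree exactly $k+1$. The plan is an induction on $k$ anchored by the base cases $k=1$ (every minimal factor-critical graph has a vertex of degree $2$, via Lov\'asz's ear decomposition) and the boundary cases $k\in\{n-6,n-4,n-2\}$ already settled in \cite{FS}. The present paper will insert the case $k=2$ into this spectrum, relying on the tight cut decomposition together with the minimal-brick cubic-vertex theorems (Theorems \ref{NT} and \ref{LZL}); this $k=2$ result is the decisive new ingredient and the technical heart of the argument.

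For the inductive step with $k\geq 3$, assume for contradiction that $G$ is a minimal $k$-factor-critical counterexample of smallest order, so every vertex has degree at least $k+2$. The first observation is that $G-v$ is $(k-1)$-factor-critical for every $v\in V(G)$, because removing any $(k-1)$-set from $G-v$ is the same as removing a $k$-set from $G$. Deleting edges greedily from $G-v$ produces a minimal $(k-1)$-factor-critical spanning subgraph $H_v\subseteq G-v$, and the inductive hypothesis supplies a vertex $u_v$ with $d_{H_v}(u_v)=k$. The objective is then to choose $v$ (and to schedule the edge deletions yielding $H_v$) so that no edge incident to $u_v$ in $G$ is removed, forcing $d_G(u_v)\leq k+1$ and contradicting $\delta(G)\geq k+2$. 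A complementary, more direct avenue goes through \emph{critical witnesses}: since $G-e$ is not $k$-factor-critical for any $e=xy\in E(G)$, there is a set $S_e\subseteq V(G)\setminus\{x,y\}$ with $|S_e|=k$ such that every perfect matching of $G-S_e$ uses $e$; a Dulmage--Mendelsohn style analysis of the forced edges in $G-S_e$, combined with a double count of the pairs $(e,S_e)$, ought to rule out $\delta(G)\geq k+2$ in the presence of enough symmetry.

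The principal obstacle is the lack of a brick-and-brace-style decomposition theorem for $k$-factor-critical graphs when $k\geq 3$: the $k=2$ argument of this paper leans heavily on that decomposition and on the minimal-brick cubic-vertex theorems, neither of which has an established analogue for larger $k$. Concretely, in the inductive step it is delicate to control how the passage from $G$ to $H_v$ distorts the local degree structure around the low-degree vertex $u_v$; without a structural decomposition, there is no obvious reason why $u_v$ should inherit its small degree from $G$ rather than from the greedy edge deletions. Developing an analogue of tight cut decomposition, or an ear-type characterization of $k$-factor-critical graphs that tracks degrees along successively added ears, looks like the essential missing input; absent such a tool, I expect the inductive approach to settle further individual values of $k$ (most plausibly $k=3$, by adapting the brick-based machinery) but not the full conjecture.
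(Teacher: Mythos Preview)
The statement you are attempting to prove is \emph{Conjecture}~\ref{conj1}: the paper does not prove it and does not claim to. What the paper actually establishes is the single case $k=2$ (Theorem~\ref{main}), and even that is stated in the stronger form ``at least four cubic vertices'' rather than merely ``minimum degree $3$''. So there is no ``paper's own proof'' of the full statement to compare against; the relevant comparison is between your $k=2$ sketch and the paper's argument, and there your outline (tight cut decomposition plus Theorems~\ref{NT}/\ref{LZL}) is broadly aligned with what the paper does, though the paper works with 2-separations and a brick-decomposition tree rather than tight cuts directly, and needs the auxiliary Lemmas~\ref{marker} and~\ref{deletable} to control marker edges and deletability through the decomposition.

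For $k\geq 3$ your proposal is not a proof but a plan, and you say so yourself in the final paragraph. The inductive step contains a genuine gap that you flag but do not close: passing from $G$ to a minimal $(k-1)$-factor-critical spanning subgraph $H_v\subseteq G-v$ and invoking the inductive hypothesis gives a vertex $u_v$ with $d_{H_v}(u_v)=k$, but nothing prevents every edge of $G$ incident with $u_v$ outside $H_v$ from having been deleted in the greedy reduction, so $d_G(u_v)$ is uncontrolled. The ``choose $v$ so that no edge at $u_v$ is removed'' step is asserted as an objective, not carried out, and the alternative double-counting route via critical witnesses $(e,S_e)$ is likewise only gestured at. As you correctly note, there is no known analogue of brick decomposition or of the minimal-brick cubic-vertex theorems for $k\geq 3$, and without such structural input neither of your two avenues can be completed. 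In short: the $k=2$ part matches the paper's approach in spirit, but the general-$k$ argument remains the open problem that Conjecture~\ref{conj1} records.
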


In previous papers \cite{GZ1, GZ2}, we had confirmed Conjecture \ref{conj1}
for $k=n-8$ and $n-10$. For $k=2$, only partial solutions can be obtained.
For a 3-connected minimal bicritical graph $G$, it is also a minimal brick
since $G-e$ is not bicritical for each $e\in E(G)$, yielding $G-e$ is not a brick.
So Theorem \ref{LZL} implies that a 3-connected minimal bicritical graph has
at least four cubic vertices. Further,  as mentioned above,
bicritical graphs without removable edges are minimal bicitical.
Y. Zhang et al. \cite{ZWY} obtained the following theorem.

\begin{thm}[\cite{ZWY}]\label{ZWY}
Every bicritical graph  without removable edges has at least four cubic vertices.
\end{thm}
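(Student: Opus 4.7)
My plan is to prove Theorem \ref{ZWY} by induction on $|V(G)|$, using the tight cut decomposition of matching covered graphs to reduce the general bicritical case to the case of a minimal brick, which is handled by Theorem \ref{LZL}. The base case is $K_4$, whose four vertices are all cubic. Since every bicritical graph is non-bipartite (a bipartite graph with a perfect matching has balanced color classes, so deleting two vertices from one class destroys matchability), the tight cut decomposition of $G$ contains only bricks and no braces. If $G$ is itself 3-connected, then $G$ is a brick; the hypothesis that no edge of $G$ is removable says that $G - e$ fails to be matching covered, hence fails to be a brick, for every $e$. Consequently $G$ is a minimal brick and Theorem \ref{LZL} yields four cubic vertices at once.

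Otherwise $G$ admits a nontrivial tight cut $C = \partial(X)$, giving two bicritical $C$-contractions $G_1 := G/X$ (with contracted vertex $x_1$) and $G_2 := G/\overline{X}$ (with contracted vertex $x_2$) of strictly smaller order; in the multi-graph sense standard in matching theory, degrees of non-contracted vertices are preserved under contraction. The key structural lemma I need is that both contractions inherit the property of having no removable edges. The induction hypothesis then provides four cubic vertices in each $G_i$. At most one of these on each side can be the contracted vertex (this is possible only when $|C|=3$), so at least three cubic vertices of $G_1$ lie in $\overline{X} \subseteq V(G)$ and at least three cubic vertices of $G_2$ lie in $X \subseteq V(G)$. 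Since $X$ and $\overline{X}$ are disjoint, this yields at least $3 + 3 = 6 \geq 4$ cubic vertices of $G$, comfortably more than needed.

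The main obstacle is the inheritance lemma: given a candidate removable edge $e$ of, say, $G_1$, one must lift it to a removable edge of $G$ by splicing a perfect matching of $G_1 - e$ containing a prescribed edge $f$ with a compatible perfect matching on the other shore of $C$. The subtle point is that when $e$ itself lies on the spliced-in side, the tight-cut edge used in the splice must be chosen so that a perfect matching of the residual shore avoiding $e$ still exists; I expect this can be arranged by exploiting $|C| \geq 3$ together with the abundance of perfect matchings guaranteed by bicriticality. Should the inheritance fail for some exceptional tight cut, one could fall back on a direct analysis of that cut, but I anticipate the lemma to go through cleanly in all cases, making the bottleneck essentially a careful matching-splicing argument rather than any hard combinatorial counting.
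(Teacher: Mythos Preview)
The paper does not prove Theorem~\ref{ZWY}; it is quoted from \cite{ZWY}. Note however that the paper's own Theorem~\ref{main} is strictly stronger (a deletable edge is automatically removable, so ``no removable edge'' implies ``minimal bicritical''), and the paper's proof of Theorem~\ref{main} does \emph{not} use tight cut contractions. It uses the $2$-separation decomposition of Theorem~\ref{bicritical}, in which the two pieces remain \emph{simple} graphs with a single added marker edge $uv$, and it tracks via Lemma~\ref{deletable} how \emph{deletable} edges behave under that operation.

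Your inheritance lemma, by contrast, is false --- and not for an exceptional tight cut but systematically. In a bicritical, non-$3$-connected $G$, a nontrivial tight cut $\partial(X)$ arises from a $2$-separation $\{u,v\}$ with (say) $u\in X$, $v\in\overline X$, and every cut edge incident with $u$ or $v$. By Lemma~\ref{ZWY1}(1), $u$ has at least two neighbours in $\overline X\setminus\{v\}$; contracting $\overline X$ to $\bar x$ turns these into at least two parallel $u\bar x$ edges in $G_1=G/\overline X$. Parallel edges in a matching covered multigraph are always removable, so $G_1$ has a removable edge regardless of whether $G$ does. Concretely, take $V(G)=\{u,v,a_1,a_2,b_1,b_2\}$ with $E(G)$ the eight edges from $\{u,v\}$ to $\{a_1,a_2,b_1,b_2\}$ together with $a_1a_2$ and $b_1b_2$. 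This $G$ is bicritical with exactly four perfect matchings and no removable edge (each edge lies in a unique perfect matching), yet for $X=\{u,a_1,a_2\}$ the contraction $G/\overline X$ has two parallel $u\bar x$ edges (from $ub_1$, $ub_2$), both removable; and one checks that \emph{every} nontrivial tight cut of this $G$ behaves the same way. Your splicing sketch cannot repair this: the edge $ub_1$ is genuinely non-removable in $G$, since $G-ub_1$ has no perfect matching through $vb_2$. The remedy is exactly what the paper does for Theorem~\ref{main} --- decompose along the $2$-separation by \emph{adding} a marker edge rather than contracting a shore, so the pieces stay simple and only the marker edge can acquire new deletability.
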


In addition to these two classes of graphs,
there are other minimal bicritical graphs.
 For example, Fig. \ref{tu-1} presents a minimal bicritical graph $G$, which has
 a removable edge $e$, but is not a minimal brick.
By using brick decomposition described in \cite{LO} and Theorem \ref{LZL},
generally, we confirm Conjecture \ref{conj1} to be true for $k=2$.
The following is our main theorem.

\begin{thm}(Main Theorem)\label{main}
Every minimal bicritical graph
has at least four cubic vertices.
\end{thm}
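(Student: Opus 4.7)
The plan is to split the argument by the connectivity of $G$ and reduce to Theorem \ref{LZL} via Lov\'asz's tight cut decomposition.

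If $G$ is 3-connected, then $G$ is itself a brick. Since $G$ is minimal bicritical, $G - e$ is not bicritical, and hence not a brick, for every edge $e$. So $G$ is a minimal brick and Theorem \ref{LZL} immediately provides at least four cubic vertices.

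Suppose instead that $G$ has connectivity exactly $2$. I would apply the tight cut decomposition to $G$, obtaining a list of bricks $B_1, B_2, \ldots, B_r$ with $r \geq 2$; no braces arise, because the brick--brace decomposition of a bicritical (hence non-bipartite) matching covered graph contains only bricks. The decomposition tree has $r-1$ edges, and each of these tight cuts inserts exactly one contraction vertex into each of the two bricks on its sides, so the total number of contraction vertices across the decomposition is $2(r-1)$. Since contractions preserve the degree of any non-contracted vertex, every non-contraction cubic vertex of $B_i$ is already a cubic vertex of $G$. Granting the key sub-claim that each $B_i$ is itself a minimal brick, Theorem \ref{LZL} yields at least four cubic vertices per $B_i$, and the count
\[
\#\{\text{cubic vertices of } G\} \;\geq\; 4r - 2(r-1) \;=\; 2r + 2 \;\geq\; 6
\]
completes this case.

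The main obstacle I foresee is establishing the sub-claim. The natural route is by contradiction: if some $B_i$ contains an edge $e$ with $B_i - e$ still a brick, then, because $e$ lies strictly inside one piece of the decomposition of $G$ and therefore on no tight cut, the tight cuts of $G$ remain tight in $G - e$. Replacing $B_i$ by $B_i - e$ in the decomposition then produces a valid tight cut decomposition of $G - e$ consisting only of bricks, which forces $G - e$ to be bicritical and contradicts the minimality of $G$. Verifying the persistence of tight cuts after an internal edge deletion, and the implication ``a matching covered graph whose tight cut decomposition consists only of bricks is bicritical'', is standard within Lov\'asz's framework, but is the technical crux of the argument and must be carried out carefully.
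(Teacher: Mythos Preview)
Your overall strategy matches the paper's, and the counting $4r-2(r-1)$ would even give a better bound than needed. The 3-connected case is handled identically. The gap is the sub-claim: it is \emph{false} that every brick $B_i$ in the decomposition of a minimal bicritical graph is a minimal brick.

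The paper itself exhibits a counterexample (Fig.~\ref{tu-1}): a minimal bicritical graph $G$ with a $2$-separation $\{u,v\}$ whose brick $G_1$ has the marker edge $uv$ deletable, so $G_1$ is bicritical but not minimal. Translated into the tight-cut/contraction language you use, the point is this. An edge $e$ of $B_i$ incident to a contraction vertex lies \emph{on} a tight cut of $G$, not ``strictly inside one piece''; that same edge of $G$ is also represented in the brick on the other side of the cut. Hence deleting $e$ from $G$ alters \emph{two} bricks, and the fact that $B_i-e$ is still a brick says nothing about whether the companion brick survives. Concretely, the parallel edges at the contraction vertex (there are at least two, by Lemma~\ref{ZWY1}(1)) make the multigraph brick $B_i$ automatically non-minimal, while the underlying simple brick is exactly $G_i'+uv$, whose marker edge $uv$ need not be an edge of $G$ at all---so ``$G-e$ bicritical'' is meaningless for $e=uv$. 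Either way your contradiction does not go through, and your appeal to the implication ``decomposition into bricks $\Rightarrow$ bicritical'' cannot be invoked.

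The paper's repair is precisely to control this one bad edge. Lemma~\ref{deletable} (iterated) shows that in each brick the \emph{only} possibly deletable edges are marker edges; Lemma~\ref{marker} then guarantees at least two bricks $G_1,G_2$ carrying exactly one marker edge each. For such a brick, either $G_i$ is already a minimal brick, or $G_i-u_iv_i$ is (a short argument rules out any further deletable edge). Theorem~\ref{LZL} now gives four cubic vertices, at most two of which are the endpoints $u_i,v_i$; the remaining two are genuine cubic vertices of $G$, and the two special bricks together yield four. Your argument can be salvaged along the same lines, but the sub-claim as stated must be replaced by this ``minimal up to marker edges'' statement.
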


\section{Some preliminaries}

In this section, we give some graph-theoretical terminologies and notations,
and some preliminary results as well.

For a set $S\subseteq V(G)$, let $G[S]$ denote the subgraph of $G$
induced by $S$ in $G$, and $G-S=G[V(G)-S]$.
For an edge $uv\in E(G)$, $G-uv$ stands for the graph $(V(G), E(G)\setminus\{uv\})$.
Similarly, if $u$ and $v$ are two non-adjacent vertices of $G$,
then $G+uv$ stands for the graph $(V(G), E(G)\cup \{uv\})$.
The union of graphs $G$ and $H$, written $G\cup H$, is the graph
with vertex set $V(G)\cup V(H)$ and edge set $E(G)\cup E(H)$.

An {\em independent set} of a graph is a set of pairwise nonadjacent vertices.
Let $G$ be a connected graph.
A {\em $k$-vertex cut} of a graph $G$ is a set $S \subset V(G)$ with $|S|=k$
such that $G-S$ is disconnected.
A graph $G$ is called {\em $k$-connected} for non-negative integer $k$
if $|V(G)|>k$ and $G-X$ is connected for every set $X \subset V(G)$ with $|X|<k$.

Let $G$ be a graph with a perfect matching. A {\em barrier} of graph $G$ is
a subset $B$ of $V(G)$ for which $c_{o}(G-B)=|B|$,
where $c_{o}(G-B)$ denotes the number of odd components of $G-B$.
A 2-vertex cut of graph $G$ that is not a barrier
is referred to as a {\em 2-separation} of $G$.
So every 2-vertex cut of a bicritical graph is a 2-separation as
each of its barriers is a singleton.

For bicritical graphs, L. Lov\'{a}sz and M. D. Plummer obtained actually the following decomposition theorem contained in the proof of Lemma 5.2.8 in \cite{LP}.

\begin{thm}[\cite{LP}] \label{bicritical}
If $G=G_{1}'\cup G_{2}'$, $V(G_{1}') \cap V(G_{2}')=\{u, v\}$,
where $G_{1}'$ and $G_{2}'$ are connected graphs different from $K_{2}$,
and $G_{i}=G_{i}'+uv$ when $uv\notin E(G_{i}')$, otherwise
$G_{i}=G_{i}'$ for $i=1, 2$,
then $G$ is bicritical if and only if both $G_{1}$ and $G_{2}$ are bicritical.
\end{thm}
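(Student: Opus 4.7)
The plan is to prove the biconditional by a direct, case-analytic argument in each direction, with cases indexed by the position of the removed pair $\{x,y\}$ relative to the cut $\{u,v\}$. The basic structural ingredient is a parity observation: since $\{u,v\}$ is a 2-separation of $G$, in every perfect matching of $G-\{x,y\}$ under consideration, $u$ and $v$ must be matched on the same side of the cut (or to each other via the edge $uv$), because matching them to opposite sides would leave an odd number of vertices on each side. Applied to $G-\{u,v\}$ when $G$ is bicritical, this parity also forces $|V(G_1')|$ and $|V(G_2')|$ to be even and at least $4$, a prerequisite for $G_1,G_2$ to be bicritical.

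For the ``$G$ bicritical $\Rightarrow$ $G_1,G_2$ bicritical'' direction, fix distinct $x,y\in V(G_1)$, take a perfect matching $M$ of $G-\{x,y\}$, and inspect how $u$ and $v$ are matched. The three non-excluded possibilities are: (a) both are matched on the $G_1'$-side, in which case $M\cap E(G_1')$ is already a perfect matching of $G_1-\{x,y\}$; (b) both are matched on the $G_2'$-side, in which case $M\cap E(G_1')$ is a perfect matching of $G_1'-\{x,y,u,v\}$ and adjoining the edge $uv\in E(G_1)$ (which exists by construction of $G_1$) completes it to a perfect matching of $G_1-\{x,y\}$; (c) $uv\in M$, handled analogously. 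The cases $\{x,y\}=\{u,v\}$ and $\{x,y\}\cap\{u,v\}\neq\emptyset$ follow the same template but are even easier because at least one endpoint of $uv$ is removed. The symmetric argument yields $G_2$ bicritical.

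For the converse, fix distinct $x,y\in V(G)$. When $\{x,y\}$ meets both sides of the cut, or contains at least one of $u,v$, the required matching assembles directly: removing $u$ or $v$ deletes the edge $uv$ from $G_1$ or $G_2$, so the relevant perfect matchings of $G_1-\{\cdot,\cdot\}$ and $G_2-\{\cdot,\cdot\}$ supplied by bicriticality lie in $E(G_1')\cup E(G_2')$ and can be glued, with the parity argument telling one on which side to place $u$ and $v$. The delicate case is $x,y\in V(G_1')\setminus\{u,v\}$ (and its symmetric counterpart). Let $M_1$ be a perfect matching of $G_1-\{x,y\}$. If $uv\notin M_1$, then $M_1\subseteq E(G_1')$ and combining it with a perfect matching of $G_2-\{u,v\}=G_2'-\{u,v\}$ yields the desired matching of $G-\{x,y\}$. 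If $uv\in M_1$, then $M_1\setminus\{uv\}$ is a perfect matching of $G_1'-\{x,y,u,v\}$, and what remains is to produce a perfect matching of $G_2'$ matching both $u$ and $v$.

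The main obstacle lies in this last sub-case, because if $uv\notin E(G_2')$ then $G_2=G_2'+uv$ and it is a priori conceivable that every perfect matching of $G_2$ uses the added edge $uv$, leaving $G_2'$ without a perfect matching. To clear this I plan to prove the auxiliary lemma: \emph{if $H$ is bicritical and $uv\in E(H)$, then $H$ has a perfect matching not containing $uv$}. The proof is short: using that every vertex of a bicritical graph has degree at least $3$, pick $w\in N_H(u)\setminus\{v\}$, let $N$ be a perfect matching of $H-\{u,w\}$ (which exists by bicriticality), and observe that $N\cup\{uw\}$ is a perfect matching of $H$ that avoids $uv$ (because $u\notin V(N)$ and $uw\neq uv$). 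Applying this lemma to $G_2$ supplies a perfect matching of $G_2-uv=G_2'$, which combined with $M_1\setminus\{uv\}$ delivers the perfect matching of $G-\{x,y\}$ and finishes the proof.
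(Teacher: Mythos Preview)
The paper does not supply its own proof of this theorem; it is quoted from Lov\'asz--Plummer as a known result (``contained in the proof of Lemma~5.2.8 in~\cite{LP}''), so there is no in-paper argument to compare against directly. Your proof is correct, and its shape---case analysis on the position of the deleted pair relative to $\{u,v\}$, plus the parity observation that $u$ and $v$ are always matched on the same side---is exactly the mechanism the authors themselves deploy when they \emph{do} write out a proof, namely in Lemma~\ref{deletable}. In particular, your auxiliary lemma (a bicritical graph has a perfect matching avoiding any prescribed edge, obtained by deleting $u$ and a neighbour $w\neq v$ and restoring $uw$) is the same device the paper uses in that proof, phrased there as ``$G_2$ is matching covered and $u,v$ each have at least two neighbours in $G_2-\{u,v\}$ (Lemma~\ref{ZWY1}(1)), so $G_2$ has a perfect matching missing $uv$.'' One cosmetic point: you call $\{u,v\}$ a ``2-separation'' from the outset, but in the paper's terminology that word presupposes a perfect matching and the non-barrier condition; it would be cleaner to say ``2-vertex cut'' until bicriticality is in hand.
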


In order to prove our main result,
we first recall brick decomposition on bicritical graphs with a 2-separation
 defined in \cite{LO}.

Let $G$ be a graph with a 2-vertex cut $\{u, v\}$. We can write $G$ as the union of
two edge-disjoint subgraphs $G_{1}'$ and $G_{2}'$, where $G_{1}'$ and $G_{2}'$ have
more than two vertices and $V(G_{1}') \cap V(G_{2}')=\{u, v\}$.
Add an edge between $u$ and $v$ to $G_{i}'$ to get a graph $G_{i}$, $i=1, 2$,
provided $G_i'$ has no such edge. If $uv\in E(G_{i}')$, we let $G_{i}=G_{i}'$.
We say that $G$ performs one decomposition and
refer to $uv$ as a {\em marker edge} of $G_{i}$ for $i=1, 2$. For example, see Fig. \ref{tu-1}.

When $G$ performs one decomposition, each $G_{i}$ is a simple graph
different from $K_{2}$ and has exactly one marker edge $uv$.
%A proof of Theorem \ref{bicritical}
%(see \cite{LP} shows that each $G_{i}$ is bicritical if $G$ is bicritical).
If some $G_{i}$ is not 3-connected, then $G_{i}$ has a 2-vertex cut,
say $\{u_{i}, v_{i}\}$. By the same manner as above,
$G_{i}$ can be decomposed into two graphs
$G_{i1}$ and $G_{i2}$, which are different from $K_{2}$.
At this time, we say that $G$ performs two decompositions.
If $\{u_{i}, v_{i}\}=\{u, v\}$, then $uv$ is a marker edge of $G_{i1}$ and $G_{i2}$
since $uv$ is a marker edge of $G_{i}$.
Otherwise, $uv$ is a marker edge of exactly one of both graphs $G_{i1}$ and $G_{i2}$.
This decomposition procedure can be repeated
until a list of 3-connected graphs is obtained.

In particular, if $G$ is a bicritical graph but not a brick, then $G$ has a 2-separation.
By Theorem \ref{bicritical}, both $G_{1}$ and $G_{2}$ are bicritical.
We can continue above procedure on each bicritical graph until a list of bricks is obtained.
Such a decomposition procedure is called a {\em brick decomposition} of $G$.
However, L. Lov\'{a}sz \cite{LO} showed that the final list of bricks does not depend on
which 2-separation is chosen at each step in the brick decomposition procedure.

\begin{thm}[\cite{LO}] \label{LO1}
%The list of bricks obtained in a brick decomposition procedure is uniquely determined.
Any two brick decompositions of a bicritical graph yield the same list of bricks.
\end{thm}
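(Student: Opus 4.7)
The plan is to prove Theorem \ref{LO1} by induction on $|V(G)|$. The base case is that $G$ is itself $3$-connected; then $G$ admits no $2$-separation, so the only brick decomposition of $G$ is the singleton $\{G\}$ and there is nothing to prove. For the inductive step, suppose the theorem holds for every bicritical graph on fewer vertices, and let $G$ be bicritical but not a brick. It suffices to show that two decompositions of $G$ that begin by choosing different $2$-separations $S_1=\{u_1,v_1\}$ and $S_2=\{u_2,v_2\}$ can be continued so as to produce the same intermediate list, after which induction handles the rest. Theorem \ref{bicritical} guarantees that every piece arising during the procedure is bicritical, which is exactly what is needed to invoke the inductive hypothesis on each piece.

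I would then split the analysis according to how $S_1$ and $S_2$ sit in $G$. In the \emph{laminar} case, some component of $G-S_2$ lies entirely in one component of $G-S_1$; then $S_2$ remains a $2$-separation of the appropriate piece $G_i^{(1)}$, and a direct check (tracking how the marker edge on $u_iv_i$ is inserted or inherited in each sub-piece) shows that decomposing first along $S_1$ and then along $S_2$ produces the same three bicritical graphs as decomposing first along $S_2$ and then along $S_1$. Each of these three graphs has strictly fewer vertices than $G$, so the inductive hypothesis equates the two resulting brick lists. In the \emph{crossing} case each of the two components of $G-S_1$ meets each of the two components of $G-S_2$ nontrivially, and this is where the main obstacle lies.

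The hardest step is the crossing case, for which I plan an uncrossing argument. Given crossing $S_1$ and $S_2$, I would consider the four ``quadrants'' of $V(G)\setminus(S_1\cup S_2)$ together with the vertices of $S_1\cup S_2$ and construct from these a new $2$-vertex cut $S'$ that is laminar with respect to at least one of $S_1, S_2$ and whose associated decomposition refines both. The subtlety is to verify that $S'$ is a genuine $2$-separation rather than a barrier and that both sides of $S'$ are bicritical; for this I would use that every barrier of a bicritical graph is a singleton, combined with Theorem \ref{bicritical} applied to the bicritical pieces already produced by $S_1$ and by $S_2$, to force both sides of $S'$ to be nontrivial and bicritical. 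Once $S'$ is obtained, decomposing along $S_1$ followed by $S'$ and decomposing along $S_2$ followed by $S'$ both lead to the same intermediate collection of smaller bicritical graphs, whereupon the inductive hypothesis applied to each of them shows that the two original decompositions terminate with identical lists of bricks.
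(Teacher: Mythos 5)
First, a point of comparison: the paper does not prove Theorem~\ref{LO1} at all. It is quoted as a known result of Lov\'asz \cite{LO} (uniqueness of the brick decomposition, a special case of the uniqueness of the tight cut decomposition), and the paper only uses it as a black box. So there is no in-paper argument to measure your proposal against; it has to stand on its own.

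On its own terms, your outline has the right skeleton --- induction on $|V(G)|$ plus a local-confluence step comparing two first choices of $2$-separation, with Theorem~\ref{bicritical} supplying bicriticality of the pieces --- but the crossing case, which you correctly identify as the crux, is not actually carried out, and what you assert about it is doubtful as stated. You claim one can build a \emph{single} $2$-vertex cut $S'$, laminar with one of $S_1,S_2$, whose decomposition refines both. In general no such single cut exists: consider a bicritical graph built as a ``ring'' of four blocks $Q_1,Q_2,Q_3,Q_4$ cyclically glued at $u_1,u_2,v_1,v_2$, so that $S_1=\{u_1,v_1\}$ and $S_2=\{u_2,v_2\}$ cross. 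After splitting along $S_1$, the set $S_2$ is not even contained in one piece; the cuts available in the pieces are new pairs such as $\{u_1,u_2\}$ and $\{u_2,v_1\}$, and reaching a common refinement requires several further separations together with careful bookkeeping of which marker edges appear where (note that marker edges can create multiplicities or turn a former $2$-separation into a non-cut). Verifying that these new pairs are genuine $2$-separations rather than barriers, that the pieces stay bicritical, and that the two routes yield literally the same labelled bricks is precisely the content of Lov\'asz's theorem, and your proposal replaces it with ``I would construct\dots'' and ``a direct check shows\dots''. Even the laminar case hides cases you do not address (e.g.\ $S_1\cap S_2\neq\emptyset$, or $G-S_i$ having more than two components). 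As written, the proposal is a plausible plan, not a proof; the missing uncrossing analysis is the entire mathematical difficulty.
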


We call these bricks produced during the brick decomposition procedure are
the bricks of $G$ and denote by $b(G)$ the number of these bricks.
Let $s(G)$ denote the times of decompositions in the brick decomposition procedure of $G$.

%Let $G$ be a bicritical graph.
Next we use a {\em binary tree}, denoted by $T_{G}$,
to describe a brick decomposition of $G$:
The vertices of $T_{G}$ are the bicritical graphs which arise during
the entire procedure of the brick decomposition of $G$.
The root of $T_{G}$ is $G$ and the leaves of $T_{G}$ are the bricks of $G$.
If a bicritical graph $G_i$ is decomposed into two bicritical graphs $G_{i1}$ and $G_{i2}$,
then we give two edges from $G_i$ to $G_{i1}$ and $G_{i2}$.

So $s(G)$ and  $b(G)$ are equal to the numbers of non-leaves and leaves of $T_G$, respectively.
Then $|V(T_{G})|=b(G)+s(G)$ and $|E(T_G)|=2s(G)$.
Since $T_G$ is a tree,  $|E(T_{G})|=$$|V(T_{G})|$ $-1$, which implies that
\begin{equation}\label{eq}
s(G)=b(G)-1.
\end{equation}

%Since every vertex of $T_{G}$ other than the root and leaves has degree three and
%the root of $T_{G}$ has degree two, we have
%
%
%\begin{center}
%$\sum\limits_{v \in V(T_{G})} d_{T_{G}}(v)=2|E(T_{G})|=2(b(G)+s(G)-1)=b(G)+3(s(G)-1)+2$.
%\end{center}
%So $s(G)=b(G)-1$.

If each 2-separation of $G$ is used exactly once
in the brick decomposition procedure of $G$,
then the number of marker edges of the bricks of $G$ is exactly $2s(G)$.
Otherwise, the number of marker edges of the bricks of $G$ is less than $2s(G)$.
Thus, we have that after a brick decomposition procedure of $G$,
the number of marker edges of the bricks of $G$ is at most $2s(G)$.

\begin{lem} \label{marker}
Let $G$ be a bicritical graph but not a brick.
Then after a brick decomposition procedure of $G$,
$G$ has at least two bricks, each of which has exactly one marker edge.
\end{lem}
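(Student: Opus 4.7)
The plan is a short double-counting argument based on the binary tree $T_G$ and the identity (\ref{eq}). Since $G$ is not a brick, every brick produced by the brick decomposition appears as a non-root leaf of $T_G$. I would first observe that every such brick $B$ possesses at least one marker edge: at the decomposition step in which $B$ is created, its parent in $T_G$ is split along a $2$-vertex cut $\{u_i,v_i\}$, and the edge $u_iv_i$ is added to both children by construction, so in particular to $B$. Letting $m(B)$ denote the number of marker edges of $B$, we thus have $m(B)\ge 1$ for every brick $B$ of $G$.

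Next, I would invoke the upper bound stated immediately before the lemma, namely $\sum_{B} m(B)\le 2s(G)$, where the sum is taken over all bricks of $G$. Combining this with the identity $s(G)=b(G)-1$ from (\ref{eq}) yields $\sum_{B} m(B)\le 2b(G)-2$.

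Finally, I would argue by contradiction. Suppose that at most one brick of $G$ has exactly one marker edge. Then at least $b(G)-1$ bricks satisfy $m(B)\ge 2$, so $\sum_{B} m(B)\ge 1+2(b(G)-1)=2b(G)-1$, contradicting the bound $\sum_{B} m(B)\le 2b(G)-2$. Therefore at least two bricks of $G$ have exactly one marker edge.

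I do not expect any genuine obstacle here: the argument is essentially the counting bound from the paragraph preceding the lemma combined with the tree identity (\ref{eq}). The only point requiring care is the verification that every brick inherits at least one marker edge, which is immediate from the decomposition rule adding a fresh marker edge to both children at each step, together with the hypothesis that $G$ itself is not a brick (so every leaf of $T_G$ is a proper descendant of the root and has undergone at least one decomposition step).
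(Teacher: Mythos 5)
Your proposal is correct and is essentially the paper's own argument: each brick, being created by a decomposition step, carries at least one marker edge, and if at most one brick had exactly one marker edge the total would be at least $2(b(G)-1)+1=2s(G)+1$, contradicting the bound of $2s(G)$ obtained via Eq.\ (\ref{eq}). The only cosmetic difference is that the marker edge need not be newly added (it may already be present in $G_i'$), but it is designated a marker edge in either case, so the count is unaffected.
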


\begin{proof}
Let $G_{1}, G_{2}, \ldots , G_{k}$, $k\geq 2$, be the bricks of $G$
after a brick decomposition of $G$.
Since $G$ is not a brick, for each $1 \leq i \leq k$, $G_{i}$ has at least one marker edge.
Suppose to the contrary that there is at most one brick of $G$ with exactly one
marker edge. Then the number of marker edges of the bricks of $G$ is at least
$2(b(G)-1)+1=2s(G)+1$ by taking Eq. (\ref{eq}) into account, a contradiction.
\end{proof}

\section{Proof of Main Theorem}

In this section, we will give a detailed proof of  Theorem \ref{main}.
At first, we introduce some properties of a bicritical graph with a 2-separation.

%Let $G_{1}, G_{2}, \ldots , G_{k}$ be pairwise disjoint connected graphs
%different from $K_{2}$, where $k\geq 2$.
%Let $e_{i}$ be an edge of $G_{i}$ with ends $u_{i}$ and $v_{i}$, $1\leq i\leq k$.
%We denote by $(G_{1} \oplus G_{2} \oplus \cdots \oplus G_{k})_{uv}$ the
%graph obtained from $G_{1}, G_{2}, \ldots , G_{k}$ by identifying these vertices
%$u_{1}, u_{2}, \ldots , u_{k}$ to a new vertex $u$ and these vertices
%$v_{1}, v_{2}, \ldots , v_{k}$ to a new vertex $v$,
%and then, deleting all the edges between $u$ and $v$.

%If $G$ is a bicritical graph with a 2-separation, say $\{u, v\}$.
%Let $H_{1}, H_{2}, \ldots ,$ $H_{k}$ be the components of $G-\{u, v\}$, $k\geq 2$.
%If $uv\notin E(G)$, then let $G_{i}'=G[V(H_{i})\cup \{u, v\}]$ and $G_{i}=G_{i}'+uv$,
%$1 \leq i \leq k$. Thus $G=(G_{1} \oplus G_{2} \oplus \cdots \oplus G_{k})_{uv}$.

%Y. Zhang et al. \cite{ZWY} obtained some properties of a bicritical graph with a 2-separation.

\begin{lem}[\cite{ZWY}] \label{ZWY1}
Let $G$ be a bicritical graph with a 2-separation $\{u, v\}$. Then

(1) both $u$ and $v$ have at least two neighbours in each component of $G-u-v$;

(2) if $uv\in E(G)$, then $G-uv$ is bicritical and $uv$ is removable in $G$.
\end{lem}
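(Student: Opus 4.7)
The plan is to handle the two parts in sequence, beginning with a parity observation about the components of $G-u-v$. Since $G$ is bicritical, taking the deleted pair to be $\{u,v\}$ itself shows that $G-u-v$ has a perfect matching, so every component of $G-u-v$ has even order. Let $H$ denote an arbitrary such component.

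For part (1), I would argue by contradiction: assume $u$ has at most one neighbour in $H$, and let $x$ be the unique neighbour of $u$ in $H$ if one exists, or else any vertex of $H$. By bicriticality, $G-v-x$ has a perfect matching $M$. By the choice of $x$, the vertex $u$ has no neighbour left in $H-x$, so its $M$-partner lies outside $H$. Consequently every vertex of $H-x$ must be matched within $H-x$ itself, because it cannot reach $v$ (removed) nor $u$ (already matched outside $H$). This forces $|H-x|=|H|-1$ to be even, contradicting the parity noted above. The same argument with the roles of $u$ and $v$ swapped handles $v$.

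For part (2), the cleanest route is the decomposition theorem, Theorem \ref{bicritical}. Write $G=G_1'\cup G_2'$ with $V(G_1')\cap V(G_2')=\{u,v\}$, and let $G_i$ be the graph obtained from $G_i'$ by adding the marker edge $uv$ when it is absent. The edge $uv\in E(G)$ lies in exactly one side, say $G_1'$. Now consider $G-uv$ under the same 2-separation: its two sides become $G_1'-uv$ and $G_2'$, and re-adding the marker edge restores precisely $G_1$ and $G_2$. Thus Theorem \ref{bicritical} applied to $G-uv$ says that $G-uv$ is bicritical iff $G_1$ and $G_2$ are, which holds because $G$ itself is bicritical (by the same theorem applied to $G$). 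Hence $G-uv$ is bicritical, and in particular matching covered, so $uv$ is a removable edge of $G$.

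The main obstacle is entirely localised to part (1) and amounts to choosing $x$ so that removing $x$ kills every edge from $u$ to $H$; once this is arranged, the parity obstruction is immediate. Part (2) then reduces to a bookkeeping application of the decomposition theorem already available in the excerpt, together with the elementary fact that every bicritical graph is matching covered.
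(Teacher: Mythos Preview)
The paper does not supply its own proof of this lemma; it is quoted from \cite{ZWY} and used as a black box. So there is nothing in the paper to compare your argument against, and I will simply assess it on its own merits.

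Your proof is essentially correct. The parity argument for part (1) is clean and complete: once $|H|$ is known to be even, removing $v$ and the sole possible $H$-neighbour $x$ of $u$ forces the odd set $V(H)\setminus\{x\}$ to carry a perfect matching inside itself, which is impossible. For part (2), the reduction to Theorem~\ref{bicritical} is the natural route and works, but you should make one hypothesis check explicit. To apply Theorem~\ref{bicritical} to $G-uv$ you need the two sides $G_1'-uv$ and $G_2'$ to be \emph{connected} graphs different from $K_2$. The side $G_2'$ is unchanged; for $G_1'-uv$, connectedness follows from part~(1): every component of $G_1'-u-v$ is joined to each of $u$ and $v$ by at least two edges, so deleting the single edge $uv$ cannot disconnect $G_1'$. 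Once that is noted, your conclusion that $G-uv$ is bicritical, hence matching covered, and hence that $uv$ is removable, is complete.
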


From Lemma \ref{ZWY1}(2), we can see that if $G$ is a minimal bicritical graph,
then every 2-separation of $G$ is an independent set.
Let $DE(G)$ denote the set of all the deletable edges of a bicritical graph $G$.
The following lemma is important in the proof of main theorem.

\begin{lem}\label{deletable}
Let $G$ be a bicritical graph with a 2-separation $\{u, v\}$ and
$G_{i}'$, $G_{i}$, $i=1, 2$, be as stated of Theorem \ref{bicritical}. Then

(1) if $uv\notin E(G)$, then $DE(G_{i})\backslash\{uv\}=$ $DE(G)\cap E(G_{i})$ for $i=1, 2$;

(2) if $uv\in E(G)$, then $DE(G_{i})\cup \{uv\}=$ $DE(G)\cap E(G_{i})$ for $i=1, 2$.

\end{lem}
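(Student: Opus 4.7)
The plan is to derive both parts of the lemma from Theorem \ref{bicritical} applied to $G - e$. Given an edge $e \in E(G_i')$ with $e \neq uv$ (say $e \in E(G_1')$; the $G_2'$ case is symmetric), one writes $G - e = (G_1' - e) \cup G_2'$ with common vertex set $\{u, v\}$, and after re-adding marker edges where absent, the two parts become precisely $G_1 - e$ and $G_2$. Since $G_2$ is already bicritical (by the original application of Theorem \ref{bicritical} to $G$), the crux is the equivalence $e \in DE(G) \iff e \in DE(G_1)$, after which the bookkeeping of $uv$ handles the $\setminus \{uv\}$ or $\cup \{uv\}$ in the two cases.

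For the forward direction, if $e \in DE(G_1)$ then $G_1 - e$ is bicritical, hence matching covered and therefore 2-edge-connected, so the (marker or original) edge $uv$ is not a bridge of $G_1 - e$; thus $G_1' - e$ is connected. The piece $G_2'$ is connected for the same reason, and Lemma \ref{ZWY1}(1) ensures that each piece has at least four vertices and is in particular not $K_2$. Theorem \ref{bicritical} applied to $G - e$ with this decomposition then yields that $G - e$ is bicritical, i.e., $e \in DE(G)$.

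The backward direction is the main obstacle, since a priori $G_1' - e$ may fail to be connected (the marker edge $uv$ could be a bridge of $G_1 - e$). Suppose for contradiction that $e \in DE(G)$ but $e$ is a bridge of $G_1'$, so $G_1' - e = A \cup B$ with $A, B$ disjoint and nonempty. If $\{u,v\} \subseteq A$, then $B$ is an isolated connected component of $G - e$, since no edges of $G$ run between $V(G_1') \setminus \{u,v\}$ and $V(G_2') \setminus \{u,v\}$; this contradicts the connectedness of the matching-covered graph $G - e$. Otherwise $u \in A$ and $v \in B$; then $\{u,v\}$ is a 2-vertex cut of $G - e$, which is a 2-separation because $G - e$ is bicritical, and in $G - e$ the vertex $u$ has no neighbor in $B - v$ (the only $G$-edge from $u$ into $B$ was $e$). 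Lemma \ref{ZWY1}(1) applied to $G - e$ forces $B - v = \emptyset$, so $B = \{v\}$; but then $v$ has exactly one neighbor in $V(G_1') \setminus \{u, v\}$, namely the $A$-endpoint of $e$, contradicting Lemma \ref{ZWY1}(1) applied to $G$. Hence $G_1' - e$ is connected, and Theorem \ref{bicritical} applied to $G - e$ now yields that $G_1 - e$ is bicritical, i.e., $e \in DE(G_1)$.

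With the equivalence $e \in DE(G_i) \iff e \in DE(G)$ for every $e \in E(G_i')$ with $e \neq uv$ in hand, we account for the edge $uv$ itself. In case (1), $uv \notin E(G)$, so $uv$ lies in $E(G_i) \setminus E(G)$ and cannot belong to $DE(G) \cap E(G_i)$; excising it from $DE(G_i)$ on the left yields $DE(G_i) \setminus \{uv\} = DE(G) \cap E(G_i)$. In case (2), $uv \in E(G)$; by Lemma \ref{ZWY1}(2), $uv \in DE(G)$, and $uv$ belongs to $E(G_i)$ for both $i = 1, 2$ (as the original edge in one piece and as a marker in the other), so $uv$ contributes to $DE(G) \cap E(G_i)$; adjoining $\{uv\}$ to $DE(G_i)$ on the left then produces $DE(G_i) \cup \{uv\} = DE(G) \cap E(G_i)$.
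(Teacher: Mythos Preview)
Your proof is correct and takes a genuinely different route from the paper's.

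The paper argues by direct construction of perfect matchings: for each inclusion it fixes an edge $e$, picks an arbitrary pair $u_1,v_1$, takes a perfect matching of one of $G-e-\{u_1,v_1\}$ or $G_i-e-\{u_1,v_1\}$, and shows how to splice it (adding or removing the marker edge $uv$, or combining with a matching of the other side) into a perfect matching of the target graph. This requires a fair amount of case analysis on where $u_1,v_1$ lie and whether $uv$ is used, and part~(2) is then reduced to part~(1) via $G-uv$.

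Your approach instead applies Theorem~\ref{bicritical} once more, now to $G-e$, recognising that its two pieces are exactly $G_1-e$ and $G_2$. This makes the equivalence $e\in DE(G)\iff e\in DE(G_1)$ almost immediate once the hypotheses of Theorem~\ref{bicritical} are verified, and the only real work is your bridge argument showing that $G_1'-e$ stays connected when $e\in DE(G)$; Lemma~\ref{ZWY1}(1) does the heavy lifting there. A couple of minor points: the claim ``matching covered and therefore 2-edge-connected'' is cleanest stated as ``bicritical on at least four vertices, hence 2-connected''; and your parenthetical ``the only $G$-edge from $u$ into $B$ was $e$'' tacitly assumes $u$ is an endpoint of $e$, though the conclusion that $u$ has no $(G-e)$-neighbour in $B-v$ holds regardless (since $B$ is a component of $G_1'-e$ not containing $u$, and $G_2'$ contributes no edges into $B-v$). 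Overall, your argument is more conceptual and reusable, while the paper's is more elementary and avoids re-checking connectivity.
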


\begin{proof}
(1) By Theorem \ref{bicritical}, both $G_{1}$ and $G_{2}$ are bicritical.
We need only to show that $DE(G_{1})\backslash\{uv\}=$ $DE(G)\cap E(G_{1})$ by symmetry.

Suppose that $e\in DE(G)\cap E(G_{1})$. Then $e\in E(G)$ and $G-e$ is bicritical.
Since $uv\notin E(G)$, $e\neq uv$.
For any $u_{1}, v_{1}\in V(G_{1})$, $u_{1}, v_{1}\in V(G_{1}')\subseteq V(G)$.
Let $M$ be a perfect matching of $G-e-\{u_{1}, v_{1}\}$.
If $\{u_{1}, v_{1}\}\cap \{u, v\}=\emptyset$,
since every component of $G-\{u, v\}$ is even,
both $u$ and $v$ are matched to some $G_{i}'$.

If both $u$ and $v$ are matched to $G_{1}'$ under $M$ or
$\{u_{1}, v_{1}\}\cap \{u, v\}\neq \varnothing$, then
$M\cap E(G_{1})$ is a perfect matching of $G_{1}-e-\{u_{1}, v_{1}\}$.

If both $u$ and $v$ are matched to $G_{2}'$ under $M$, then
$(M\cap E(G_{1}))\cup \{uv\}$ is a perfect matching of $G_{1}-e-\{u_{1}, v_{1}\}$.

Thus $G_{1}-e$ is bicritical, which implies that $e\in DE(G_{1})\backslash\{uv\}$.
Then $DE(G)\cap E(G_{1})$ $\subseteq DE(G_{1})\backslash\{uv\}$.

Conversely, suppose that $f\in DE(G_{1})\backslash\{uv\}$.
Then $f\in E(G_{1}')\subseteq E(G)$ and $G_{1}-f$ is bicritical.

For any $u_{1}, v_{1}\in V(G_{1}'-f)=V(G_{1}-f)$,
let $M_{1}$ be a perfect matching of $G_{1}-f-\{u_{1}, v_{1}\}$.
If $uv \notin M_{1}$, then $M_{1}\cup M_{2}$ is
a perfect matching of $G-f-\{u_{1}, v_{1}\}$,
where $M_{2}$ is a perfect matching of $G_{2}-\{u, v\}$.
If $uv \in M_{1}$, then we may consider $G_{2}$. By Lemma \ref{ZWY1} (1),
both $u$ and $v$ are adjacent to at least two vertices in $G_{2}-\{u, v\}$.
Since $G_{2}$ is also matching covered,
$G_{2}$ has a perfect matching $M_{2}'$ missing $uv$.
Then $(M_{1}\setminus \{uv\})\cup M_{2}'$
is a perfect matching of $G-f-\{u_{1}, v_{1}\}$.

Take any $u_{2}, v_{2}\in V(G_{2}')=V(G_{2})$.
Then $G_{2}-\{u_{2}, v_{2}\}$ has a perfect matching $M_{2}$.
If $uv \notin M_{2}$, then $M_{1}\cup M_{2}$
is a perfect matching of $G-f-\{u_{2}, v_{2}\}$,
where $M_{1}$ is a perfect matching of $G_{1}-f-\{u, v\}$.
If $uv \in M_{2}$, then we may consider $G_{1}-f$.
By Lemma \ref{ZWY1} (1), both $u$ and $v$ are adjacent to at least one vertices
in $G_{1}-f-\{u, v\}$. Since $G_{1}-f$ is matching covered,
$G_{1}-f$ has a perfect matching $M_{1}'$ missing $uv$.
Then $M_{1}' \cup (M_{2} \setminus \{uv\})$
is a perfect matching of $G-f-\{u_{2}, v_{2}\}$.

Let $u_{1}\in V(G_{1}')-\{u, v\}$ and $u_{2}\in V(G_{2}')-\{u, v\}$.
Then $u_{1}\in V(G_{1})-\{u, v\}$ and $u_{2}\in V(G_{2})-\{u, v\}$.
Assume that $G_{1}-f-\{u_{1}, u\}$ has a perfect matching $M_{1}$ saturating $v$ and
$G_{2}-\{u_{2}, v\}$ has a perfect matching $M_{2}$ saturating $u$.
Then $M_{1} \cup M_{2}$ is a perfect matching of $G-f-\{u_{1}, u_{2}\}$.

It follows that $G-f$ is bicritical. So $f\in DE(G)$. Then $f\in DE(G)\cap E(G_{1})$.
Thus $DE(G_{1})\backslash\{uv\}\subseteq$$DE(G)\cap E(G_{1})$.
Consequently, $DE(G_{1})\backslash\{uv\}=$ $DE(G)\cap E(G_{1})$.

\smallskip

(2) By Lemma \ref{ZWY1}, $G-uv$ is bicritical. Then $uv\in DE(G)$.
By (1), $DE(G_{i})\backslash\{uv\}=$ $DE(G-uv)\cap E(G_{i})$, $i=1, 2$.
We need only to show that $DE(G-uv)=DE(G)\setminus\{uv\}$.

Obviously, $DE(G-uv)$ $\subseteq DE(G)\setminus\{uv\}$.
Suppose that $h \in DE(G)\setminus\{uv\}$. Then $h\neq uv$ and $G-h$ is bicritical.
Without loss of generality, assume that $h\in E(G_{1}')$. So $h\in E(G_{1})$.
For any $u_{1}, v_{1}\in V(G_{1})$, $u_{1}, v_{1}\in V(G_{1}')$.
Let $M'$ be a perfect matching of $G-h-\{u_{1}, v_{1}\}$.

If both $u$ and $v$ are matched to $G_{1}'$ under $M'$ or
$\{u_{1}, v_{1}\}\cap \{u, v\}\neq \varnothing$ or $uv\in M'$, then
$M'\cap E(G_{1})$ is a perfect matching of $G_{1}-h-\{u_{1}, v_{1}\}$.

If both $u$ and $v$ are matched to $G_{2}'$ under $M'$, then
$(M'\cap E(G_{1}))\cup \{uv\}$ is a perfect matching of $G_{1}-h-\{u_{1}, v_{1}\}$.

Thus $G_{1}-h$ is bicritical. So $h\in DE(G_{1})\backslash \{uv\}$.
Since $DE(G_{1})\backslash\{uv\}=$ $DE(G-uv)\cap E(G_{1})$, $h\in DE(G-uv)$.
Then $DE(G)\setminus\{uv\}\subseteq DE(G-uv)$.
Therefore, $DE(G-uv)=DE(G)\setminus\{uv\}$.
\end{proof}

From Lemma \ref{deletable}, we obtain the following corollary,
which is a version of minimal bicritical graphs of Theorem \ref{bicritical}.

\begin{cor} \label{minimal}
Let $G$ be a graph with a 2-vertex cut $\{u, v\}$ and
$G_{i}'$, $G_{i}$, $i=1, 2$, be as stated of Theorem \ref{bicritical}. Then

(1) if $uv\notin E(G)$ and each $G_{i}$ is minimal bicritical, then $G$ is minimal bicritical;

(2) if $G$ is minimal bicritical, then every edge of $G_{i}$ is not deletable except for $uv$.

\end{cor}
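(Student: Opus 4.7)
The plan is to derive both parts as essentially bookkeeping consequences of Lemma \ref{deletable}, combined with Theorem \ref{bicritical} and the observation, stated immediately after Lemma \ref{ZWY1}, that every $2$-separation of a minimal bicritical graph is independent.

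For part (1), I would first invoke Theorem \ref{bicritical} to conclude that $G$ is bicritical, since $G_{1}$ and $G_{2}$ are. To show that $G$ has no deletable edge, take an arbitrary $e\in E(G)=E(G_{1}')\cup E(G_{2}')$ and, without loss of generality, assume $e\in E(G_{1}')\subseteq E(G_{1})$. Because $G_{1}$ is minimal bicritical we have $DE(G_{1})=\varnothing$, so Lemma \ref{deletable}(1) (applicable because $uv\notin E(G)$) gives $DE(G)\cap E(G_{1})=DE(G_{1})\setminus\{uv\}=\varnothing$, whence $e\notin DE(G)$. The symmetric argument for $e\in E(G_{2}')$ then forces $DE(G)=\varnothing$, so $G$ is minimal bicritical.

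For part (2), $G$ is minimal bicritical, and its $2$-vertex cut $\{u,v\}$ is a $2$-separation since every barrier of a bicritical graph is a singleton. Lemma \ref{ZWY1}(2) then forces $uv\notin E(G)$, for otherwise $uv$ itself would be deletable, contradicting minimality. With $uv\notin E(G)$ the hypothesis of Lemma \ref{deletable}(1) is satisfied, and using $DE(G)=\varnothing$ we obtain $DE(G_{i})\setminus\{uv\}=DE(G)\cap E(G_{i})=\varnothing$, i.e.\ $DE(G_{i})\subseteq\{uv\}$. This is exactly the assertion that every edge of $G_{i}$ other than $uv$ fails to be deletable in $G_{i}$.

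The argument is essentially routine once Lemma \ref{deletable} is at hand, since the structural content has already been packaged into that lemma. The only point requiring a moment of care is in part (2), where one must first rule out $uv\in E(G)$ before invoking Lemma \ref{deletable}(1); this step is handled cleanly by the removability half of Lemma \ref{ZWY1}, so I do not expect any genuine obstacle.
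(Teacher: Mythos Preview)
Your proposal is correct and follows essentially the same approach as the paper: both parts are deduced directly from Lemma \ref{deletable}(1), using Theorem \ref{bicritical} to ensure bicriticality and, in part (2), the observation (via Lemma \ref{ZWY1}(2)) that $uv\notin E(G)$ so that Lemma \ref{deletable}(1) applies. The paper's own proof is slightly terser but proceeds identically.
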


\begin{proof}
(1) By Theorem \ref{bicritical}, $G$ is bicritical.
It suffices to prove that $G$ has no deletable edge.
Since each $G_{i}$ is minimal bicritical, $DE(G_{i})=\emptyset$, $i=1, 2$.
By Lemma \ref{deletable} (1), for each $i=1, 2$,
$DE(G_{i})\backslash\{uv\}$ $=DE(G)\cap E(G_{i})=\emptyset$.
So $DE(G)=\emptyset$. Then $G$ is minimal bicritical.

(2) By Theorem \ref{bicritical}, both $G_{1}$ and $G_{2}$ are bicritical.
Since $G$ is minimal bicritical, $uv\notin E(G)$ and $DE(G)=\emptyset$.
By Lemma \ref{deletable} (1),
$DE(G)\cap E(G_{i})=DE(G_{i})\backslash\{uv\}=\emptyset$, $i=1, 2$.
That is, each $G_{i}-uv$ has no deletable edge.
So the only possible deletable edge in $G_{i}$ is $uv$.
\end{proof}

Note that if $G$ is a minimal bicritical graph with a 2-separation $\{u, v\}$,
then the edge $uv$ may be deletable in some $G_{i}$, $i=1, 2$.
For example, a minimal bicritical graph $G$ with a 2-separation $\{u,v\}$
shown in Fig. \ref{tu-1} can be decomposed into two graphs $G_1$ and $G_2$,
where $uv$ is deletable in $G_{1}$ but not deletable in $G_{2}$.
That is, $G_2$ is  minimal bicritical but $G_1$ is not.
(The bold edge is a marker edge of $G_{1}$ and $G_{2}$.)

\begin{figure}[h]
\centering
\includegraphics[height=3.5cm,width=11cm]{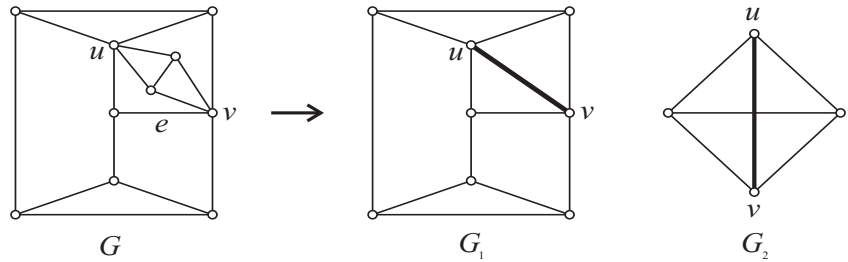}
\caption{\label{tu-1} A minimal bicritical graph $G$ with a removable edge $e$,
but $uv$ is deletable in $G_{1}$.}
\end{figure}

We are now ready to prove our main theorem.

\noindent{\bf Proof of Theorem \ref{main}.}
If $G$ is 3-connected, then $G$ is a minimal brick as mentioned in Section 1.
By Theorem \ref{LZL}, the result holds.

Otherwise, $G$ is 2-connected and has a 2-vertex cut, which must be a 2-separation.
By Lemma \ref{ZWY1}, every 2-separation is an independent set of $G$.
After a brick decomposition procedure of $G$, we obtain a list of bricks of $G$.
By Lemma \ref{marker}, there are at least two bricks of $G$, say $G_{1}$ and $G_{2}$,
which have exactly one marker edge $e_{1}$ and $e_{2}$, respectively.
Let $e_{i}=u_{i}v_{i}$ for $i=1, 2$.
Clearly, both $G_{1}$ and $G_{2}$ are 3-connected.
Since only $u_{i}v_{i}$ in $G_{i}$
is the new edge added during the brick decomposition procedure of $G$,
$E(G_{i})\setminus \{u_{i}v_{i}\}=E(G_{i})\cap E(G)$.
Thus the degree of every vertex of $G_{i}-\{u_{i}, v_{i}\}$
is the same as the degree of a vertex of $G$.

{\bf Claim.} Every edge of $G_{i}-u_{i}v_{i}$, $i=1, 2$,
is not deletable in $G_{i}$.

Since $G$ is minimal bicritical, $DE(G)=\emptyset$.
When $G$ performs the first decomposition, we obtain two bicritical graphs
with exactly one marker edge, respectively.
By Lemma \ref{deletable} (1), every edge of these two graphs is not deletable except for
the two marker edges. If one of the two graphs is not a brick,
then we continue with the decomposition.
By repeatedly applying Lemma \ref{deletable} to each decomposition,
every edge of the decomposed bicritical graphs is not deletable except for their marker edges.
Thus, after the brick decomposition of $G$,
the only possible deletable edges of the final list of bricks are their marker edges.
So Claim holds.

Now we consider whether $G_{i}$, $i=1, 2$, is a minimal brick or not.

If $G_{i}$ is a minimal brick, then $G_{i}$ has at least four cubic vertices by Theorem \ref{LZL}.

Otherwise, there is an edge of $G_{i}$ such that
the deletion of it results in a graph that is also a brick.
Since every edge of $G_{i}$ is not deletable except for $u_{i}v_{i}$ by Claim,
$G_{i}$ only has an edge $u_{i}v_{i}$ such that $G_{i}-u_{i}v_{i}$ is a brick.
If $G_{i}-u_{i}v_{i}$ has a deletable edge, say $e'$,
then $G_{i}-u_{i}v_{i}-e'$ is bicritical.
Note that $G_{i}-u_{i}v_{i}-e'$ is 3-connected.
Otherwise, $G_{i}-u_{i}v_{i}$ is a minimal brick. We are done.
Thus $G_{i}-u_{i}v_{i}-e'$ is a brick. So $G_{i}-e'$ is a brick.
It follows that $e'$ different from $u_{i}v_{i}$ is deletable
in $G_{i}$, which is a contradiction to Claim.
This contradiction implies that $G_{i}-u_{i}v_{i}$ is also a minimal brick.
So $G_{i}-u_{i}v_{i}$ has at least four cubic vertices by Theorem \ref{LZL}.

In either case, each $G_{i}$ has at most two cubic vertices incident with
its marker edge. Then each $G_{i}$ has at least two cubic vertices, which are
the cubic vertices of $G$. Therefore, $G$ has at least four cubic vertices.
~~~~~~~~~~~~~~~~~~~~~~~~~~~~~~~~~~~~~~~~~~~~~~~
~~~~~~~~~~~~~~~~~~~~~~~~~~~~~~~~~~~~~~~~~~~~~~~$\square$

\bigskip

This bound of Theorem \ref{main} is sharp. As shown in Fig. \ref{tu-2},
$G$ is a minimal bicritical graph with exactly four cubic vertices.

\begin{figure}[h]
\centering
\includegraphics[height=2cm,width=8cm]{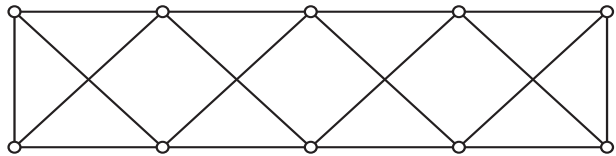}
\caption{\label{tu-2} A minimal bicritical graph $G$ with exactly four cubic vertices.}
\end{figure}

\bigskip
\noindent {\bf Acknowledgements}

The authors would like to thank the referees for their valuable comments,
which helped us to improve the presentation of the paper.

\end{document}